\tikzstyle arrowstyle=[scale=1]
\tikzstyle directed=[postaction={decorate,decoration={markings,
    mark=at position .65 with {\arrow[arrowstyle]{stealth}}}}]
\tikzstyle reverse directed=[postaction={decorate,decoration={markings,
    mark=at position .65 with {\arrowreversed[arrowstyle]{stealth};}}}]
 \newtheorem{thm}{Theorem}
\newtheorem*{remark*}{Remark}
 \newcommand{\R}{\mathbb{R}}
\numberwithin{equation}{section}
\begin{document}

\title{Vertex to vertex geodesics on platonic solids}

\author{Serge Troubetzkoy}
\address[]{Aix Marseille Univ, CNRS, I2M, Marseille, France}
\address[]{Postal address: I2M, Luminy, Case 907, F-13288 Marseille Cedex 9, France}
\email{serge.troubetzkoy@univ-amu.fr}
\maketitle
\begin{abstract}
We give a simple proof  based on symmetries that there are no geodesics from a vertex to itself in the cube, tetrahedron, octahedron,  and icosahedron.
\end{abstract}

A straight-line trajectory on  the surface of a polyhedron is a straight line within a
face that is uniquely extended over an edge so that the trajectory forms a straight line in the plane
when the adjacent faces are unfolded to lie in the same plane.
This is well-defined away from the
vertices. Locally, a straight-line trajectory is the shortest curve between points, thus
it is a geodesic.  
By choosing a tangent vector at a vertex, one can consider the corresponding
geodesic emanating from that vertex. Thus, while geodesics can start and end at a vertex, they can not pass through a vertex.  The study of geodesics on polyhedra was initiated quite some time ago in \cite{S,R}

We give a short simple proof of the following fact first proved in \cite{DDTY} and \cite{F} and described in the expository article \cite{AA1}  as well as in the unpublished problem book \cite{P} where the question is attributed to  Jaros{\l}aw K\c{e}dra (starting with version 8 of the book (2016) for the cube and version 10 of the book for the other polyhedra (2018)).  A proof close to ours, but 
dressed up in advanced terminology, is given in \cite{AAH}.

\begin{thm}
There are no geodesics connecting a vertex to itself  on the cube,  tetrahedron, octahedron, or icosahedron.
\end{thm}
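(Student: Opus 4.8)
The plan is to argue by contradiction through the developing (unfolding) map, reducing the statement to a combinatorial--geometric claim and then killing it with the solid's symmetries. Suppose $\gamma$ were a geodesic from a vertex $v$ back to $v$, of length $L$. Unfolding the successive faces that $\gamma$ crosses develops $\gamma$ into a straight segment $\overline{PQ}$ in the plane, where $P$ and $Q$ are two images of the single vertex $v$ under the developing map, $P$ a corner of the first developed face and $Q$ a corner of the last. Since the faces are regular triangles or squares and the linear holonomy consists of rotations preserving the relevant lattice, their developed images tile a region of the plane by a fixed regular tiling (triangular or square), and every vertex of the solid develops to a vertex of that tiling. The theorem is thus equivalent to the assertion that no straight segment joins two tiling-vertices that are both images of the \emph{same} solid-vertex $v$ while its interior avoids every other vertex-image.

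First I would record the symmetry input: each of the four solids is vertex-transitive with a large stabilizer of $v$ (containing reflections through $v$), and the relevant holonomy is rotation by the angular defect at a vertex, namely $\pi$, $2\pi/3$, $\pi/2$, $\pi/3$ for the tetrahedron, octahedron, cube, and icosahedron. The key move is to exploit the involution reversing $\gamma$. Consider $\bar\gamma(t)=\gamma(L-t)$, again a loop at $v$; its development is the same chain of faces read backwards. Comparing this with the image of the forward development under the point reflection $\rho$ about the midpoint $M$ of $\overline{PQ}$ (for the cube, where $\rho$ preserves each square face) or under the reflection $r$ in the perpendicular bisector of $\overline{PQ}$ (for the triangle-faced solids, where $r$ preserves the triangular tiling), I would show that this reversal symmetry actually preserves the developed tiling and hence descends to an isometry $g$ of the surface which fixes the midpoint $m=\gamma(L/2)$, satisfies $g(v)=v$, and carries $\gamma$ to $\bar\gamma$.

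Granting this, the contradiction comes from locating $M$ and reading off the cone-angle bookkeeping: the reversal symmetry forces $M$ to sit at a genuine symmetry center of the tiling, which pins down the parity of the number of crossed faces and the position of $Q$ relative to $P$; tracking how the wedge of angle equal to the cone angle at $v$ develops around $P$ and around $Q$ then shows that $Q$ cannot in fact be an image of $v$ (it is forced to be an edge-midpoint, or an image of a different vertex). The \textbf{main obstacle} is precisely this compatibility step: verifying that the reversal really is a symmetry of the developed tiling and extracting the incompatibility. This is where the four listed solids behave uniformly, since the defects $\pi,\,2\pi/3,\,\pi/2,\,\pi/3$ are all compatible with the triangular or square tiling, so $M$ is pinned to a symmetric location. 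For the dodecahedron the defect $\pi/5$ and its golden-ratio arithmetic break exactly this compatibility, which is the structural reason vertex-to-vertex geodesics \emph{do} exist there; keeping the argument elementary while making this dichotomy rigorous is the delicate part.
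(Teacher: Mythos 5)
Your outline has the same skeleton as the paper's proof: develop $\gamma$ into a straight segment $\overline{PQ}$ in the square or triangular tiling, find a symmetry of the development that swaps $P$ and $Q$, push it back down to the solid, and conclude that the endpoints are exchanged by an isometry fixing no vertex. But one of your key claims is false, and the two steps you yourself flag as ``the main obstacle'' are exactly the substance of the proof and are not carried out. The false claim: for the triangle-faced solids you take the reflection $r$ in the perpendicular bisector of $\overline{PQ}$ and assert that $r$ preserves the triangular tiling. It does not in general: the mirror lines of the triangular tiling point only in directions that are multiples of $30^{\circ}$ relative to the lattice, while $\overline{PQ}$ can join lattice points in other directions (take $Q-P=2v_1+v_2$, whose perpendicular bisector is parallel to no mirror of the tiling). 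The symmetry that actually works---and the one the paper uses uniformly in all four cases---is the point reflection about the midpoint $m$ of $\overline{PQ}$. Its validity rests on the parity fact, which you state backwards: since a geodesic cannot pass through a vertex, the midpoint $(p/2,q/2)$ is not a lattice point, so $p$ or $q$ is odd; hence $m$ is a face center or edge midpoint of the square tiling, and always an edge midpoint of the triangular tiling (half-lattice points of the triangular lattice are never face centers), and all of these are $2$-fold rotation centers of the respective tiling.

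Second, the assertion that the planar symmetry ``descends to an isometry $g$ of the surface'' needs an argument, because the developing map is not injective. The paper makes this rigorous by embedding the unfolding in the plane $z=0$ of $\R^3$, reading the central symmetry as the $180^{\circ}$ rotation about the vertical line $L$ through $m$, and refolding the symmetric pairs of edges one at a time, checking that the rotational symmetry about $L$ survives each refold (with one extra symmetric fold along the central edge $e$ in the edge-midpoint case, using the plane spanned by $e$ and $L$). Finally, your closing ``cone-angle bookkeeping'' is not coherent as stated: $Q$ is a vertex of the tiling and is by construction the image of the terminal vertex of $\gamma$, so it cannot be ``forced to be an edge-midpoint.'' What finishes the proof is the identification of the descended isometry as a $180^{\circ}$ rotation of the solid about an axis through the solid's center and through face centers or edge midpoints; such a rotation fixes no vertex of any of the four solids, and since it swaps the two endpoints of $\gamma$, they cannot coincide. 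With the reflection replaced by the point reflection and steps (a) and (b) actually proved, your outline becomes the paper's proof; as written, it has genuine gaps.
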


\begin{proof} The edges of all the polygons will be normalized to have length 1.
We begin with the cube.
Consider a geodesic segment $\gamma$ which starts at a vertex and ends at a vertex.  We  will show that the two vertices cannot coincide. For this we unfold the the geodesic, in our unfoldings the squares will be parallel to the coordinate axes
 and the geodesic will start at a vertex of a square placed at the origin.
 The unfolding of $\gamma$  is a line segment starting at the origin and ending  at vertex with coordinates $(p,q) \in \mathbb{N}^2$.
 The midpoint  $m$ of the geodesic segment has coordinates $(p/2,q/2)$.  Since geodesics do not pass through vertices either $p$ or $q$ must be odd.

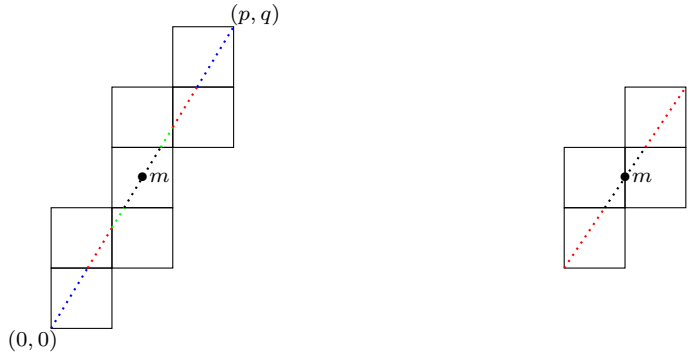
\begin{figure}[ht]
\begin{minipage}[ht]{.5\linewidth}
\centering
\begin{tikzpicture}[scale=0.8]
\node at (-1.3,-2.2) {\tiny $(0,0)$};
\draw
(-1,-2) rectangle (0,-1);
\draw 
(-1,-1)  rectangle (0,0);
\draw[thin] 
(0,-1) rectangle (1,0);
\draw (0,0) rectangle (1,1);
\node at (0.5,0.5) {\tiny $\bullet$};
\node at (0.78,0.5) {\tiny $m$};
\draw[thin] 
(0,1) rectangle (1,2);
\draw (1,1) rectangle (2,2);
\draw (1,2) rectangle (2,3);
\node at (2.35,3.2) {\tiny $(p,q)$};
\draw[thick, dotted, blue] (-1,-2) -- (-2/5,-1);
\draw[thick, dotted, red]  (-2/5,-1) -- (0,-1/3);
\draw[thick, dotted, green]   (0,-1/3) -- (1/5,0);
\draw[thick, dotted]  (1/5,0) -- (4/5,1);
\draw[thick, dotted, green]  (4/5,1) -- (1,4/3);
\draw[thick, dotted, red]  (1,4/3) -- (7/5,2);
\draw[thick, dotted, blue]   (7/5,2) -- (2,3);

\end{tikzpicture}
\end{minipage}\nolinebreak
\begin{minipage}[ht]{.5\linewidth}
\centering
\begin{tikzpicture}[scale=0.8]
\draw (0,0) rectangle (1,1);
\draw (0,1) rectangle (1,2);
\node at (1,1.5) {\tiny $\bullet$};
\node at (1.28,1.5) {\tiny $m$};
\draw (1,1) rectangle (2,2);
\draw (1,2) rectangle (2,3);
\draw[thick, dotted, red] (0,0) -- (2/3,1);
\draw[thick, dotted] (2/3,1) -- (1,3/2);
\draw[thick, dotted] (1,3/2) -- (4/3,2);
\draw[thick, dotted, red] (4/3,2) -- (2,3);
\end{tikzpicture}
\end{minipage}
\caption{Center of symmetry of the unfolding.}\label{fig1}
\end{figure}

 If both $p$ and $q$ are odd then $m$ is the center of one of the  squares of the unfolding (Figure \ref{fig1} left), and thus
 the midpoint $M$ of $\gamma$ is located in the center of one of the faces of the cube. 
If $p$ is even and $q$ is odd; then the midpoint of the unfolding is located in the middle of a vertical edge of one of the squares of the unfolding (Figure \ref{fig1} right),  while if $p$ is odd and $q$ is even then it is located in the middle of one of the horizontal edges of the unfolding.  In both of these last two cases the corresponding point $M$ on the cube is in the middle of one of the edges of the cube.  

In all  three cases the unlabeled unfolded figure is centrally symmetric about the point $m$.  We will show that refolding this leads to a symmetry of the geodesic $\gamma$. 

 Suppose first that $p$ and $q$ are odd, so $m$ is the center of a square.  We consider the unfolding embedded in $\R^3$, contained in the plane $z=0$. The two dimension central symmetry can be interpreted in $\mathbb{R}^3$ as the rotation by $180^{\circ}$ around the line
$L$ through $m$ perpendicular to face of the unfolding containing $m$.
Consider the point $m$ and follow the unfolded trajectory starting at $m$ in both directions; we arrive at the first pair of centrally symmetric edges and we refold them, 
the resulting object is again a geodesic which is invariant
 under a rotation by $180^{\circ}$ about the line $L$. 
We repeat this
 procedure each time we reach a pair of symmetric edges, in the end we obtain the
geodesic $\gamma$ on the cube, and since the symmetry is preserved at each step, we conclude that $\gamma$ is invariant under a rotation by $180^{\circ}$ about $L$.
Since $L$ passes through the center $m$ of a side and is perpendicular to this side, it  passes through the center of the cube and the center of the opposite face.
 We conclude that the two endpoints of $\gamma$ are a rotation of each other and thus cannot coincide (Figure \ref{fig2} top).

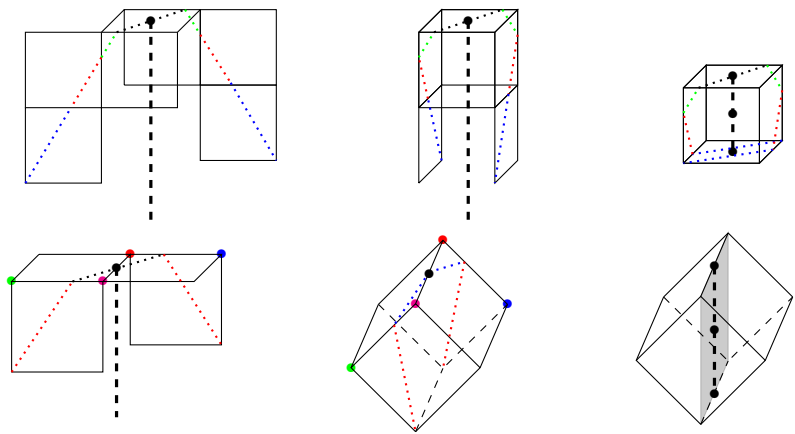
\begin{figure}[ht]
\begin{minipage}[ht]{.33\linewidth}
\centering
\begin{tikzpicture}[scale=1]

     \coordinate (C2) at (3.3, 0.3);
     \coordinate (C3) at (2.3,1.3);
     \coordinate (C4) at (3.3, 1.3);
      \coordinate (D1) at (1.3, -0.7);
      \coordinate (D2) at (2.3, -0.7);
     \coordinate (E1) at (-1,0);
       \coordinate (E2) at (-1,1);

 \draw (0,1) -- (1,1) -- (1.3,1.3);
 \draw (0,1) -- (-1,1) -- (-1,0);
 \draw (1,1) -- (1,0) -- (0,0) -- (0,1);
 \draw (0,1) -- (0.3,1.3);
 \node at (0.65,1.15)  { \tiny $\bullet$};
 \draw[very thick, dashed] (0.65,1.15) -- (0.65,-1.5);
 
    \draw (1.3,0.3) --  (2.3,0.3) --   (D2) -- (D1) -- (1.3,0.3);
            \draw (0,0) -- (0,-1) -- (-1,-1) -- (-1,0) -- (0,0);
\draw[] (0.3,0.3) -- (0.3,1.3) -- (1.3,1.3)  -- (1.3,0.3) -- (0.3,0.3);

\draw (1.3,1.3) -- (C3)  --  (2.3,0.3) -- (1.3,0.3) -- (1.3,1.3);
\draw[thick, dotted, black]   (0.2,1) -- (1.1,1.3) ;
\draw[thick, dotted, blue]  (-1,-1) --  (-2/5,0) ;
      \draw[thick, dotted, red]   
       (0,0.667)  -- (-2/5,0);
  \draw[thick, dotted, red ]   (1.3, 0.9667) -- (1.7,0.33) ;
\draw[thick, dotted, blue] (1.7,0.33) -- (D2);
\draw[thick, dotted, green]  (1.1,1.3) -- (1.3, 0.9667);
\draw[thick, dotted, green]    (0,0.667) -- (0.2,1) ;
        
\end{tikzpicture}
\end{minipage}\nolinebreak 
\begin{minipage}[ht]{.33\linewidth}
\centering
\begin{tikzpicture}[scale=1]

     \coordinate (C2) at (0.3, -0.7);
     \coordinate (C3) at (1,-1);
     \coordinate (C4) at (1.3, -0.7);
     
      \node at (0.65,1.15)  {  \tiny $\bullet$};
 \draw[very thick, dashed] (0.65,1.15) -- (0.65,-1.5);

\draw (0,1) -- (1,1) -- (1.3,1.3)  -- (0.3,1.3) -- (0,1);  
      
\draw[] (0,0) -- (0,1) -- (1,1)  -- (1,0) -- (0,0);
\draw[] (0.3,0.3) -- (0.3,1.3) -- (1.3,1.3)  -- (1.3,0.3) -- (0.3,0.3);
\draw (0,0) -- (0,1) -- (0.3,1.3) -- (0.3,0.3) -- (0,0);    
\draw (1,0) -- (1,1) -- (1.3,1.3) -- (1.3,0.3) -- (1,0);
\draw (0,-1) -- (C2) -- (0.3,0.3)  -- (0,0) -- (0,-1);
\draw (1.3,0.3) -- (C4)  -- (C3) -- (1,0) -- (1.3,0.3);

\draw[thick, dotted, black]   (0.2,1) -- (1.1,1.3) ;
\draw[thick, dotted, blue]  (C2) --  (0.12,0.12) ;
      \draw[thick, dotted, red]   (0.12, 0.12) --  (0,0.667) ;
  \draw[thick, dotted, red ]   (1.3, 0.9667) -- (1.18,0.18) ;

\draw[thick, dotted, blue] (1.18, 0.18) -- (C3);
\draw[thick, dotted, green]  (1.1,1.3) -- (1.3, 0.9667);
\draw[thick, dotted, green]    (0,0.667) -- (0.2,1) ;
        
\end{tikzpicture}
\end{minipage}\nolinebreak 
\begin{minipage}[ht]{.22\linewidth}
\centering
\begin{tikzpicture}[scale=1]
 
\begin{scope}
\draw (0,1) -- (1,1) -- (1.3,1.3)  -- (0.3,1.3) -- (0,1);  
\draw[thin] (0,0) -- (0,1) -- (1,1)  -- (1,0) -- (0,0);
\draw[thin] (0.3,0.3) -- (0.3,1.3) -- (1.3,1.3)  -- (1.3,0.3) -- (0.3,0.3);
\draw (0,0) -- (0,1) -- (0.3,1.3) -- (0.3,0.3) -- (0,0);    
\draw (1,0) -- (1,1) -- (1.3,1.3) -- (1.3,0.3) -- (1,0);
\draw[] (0,0) --(1,0) -- (1.3,0.3) -- (0.3,0.3) -- (0,0);
 \node at (0.65,1.15)  {  \tiny $\bullet$};
 \draw[very thick, dashed] (0.65,1.15) -- (0.65,0.15);
  \node at (0.65,0.15)  { \tiny $\bullet$};
    \node at (0.65,0.65)  {  \tiny $\bullet$};
    
\draw[thick, dotted, blue]  (1.18,0.18) -- (0,0);
\draw[thick, dotted, blue]  (1.3,0.3) -- (0.12, 0.12);
\draw[thick, dotted, red]   (0.12, 0.12) --  (0,0.667) ;
\draw[thick, dotted,red ]   (1.3, 0.9667) -- (1.18,0.18) ;
\draw[thick, dotted, green ]   (0,0.667) -- (0.2,1) ;
\draw[thick, dotted, green ]   (1.1,1.3) -- (1.3, 0.9667) ;
\draw[thick, dotted, black ]   (0.2,1) -- (1.1,1.3) ;
\end{scope}
\end{tikzpicture}
\end{minipage}
 \bigskip
\begin{minipage}[ht]{.33\linewidth}
\centering
\begin{tikzpicture}[scale=1.2]

  \node at (1,1) {\color{magenta} \tiny $\bullet$};
    \node at (1.3,1.3) {\color{red} \tiny $\bullet$};
     \node at (2.3,1.3) {\color{blue} \tiny $\bullet$};
        \node at (0,1) {\color{green} \tiny $\bullet$};
        
        \node at (1.15,1.15) {\tiny $\bullet$};
        \draw [very thick, dashed] (1.15,1.15) -- (1.15,-0.5);
        
\draw[] (0.3,1.3) -- (1.3,1.3) -- (2.3,1.3) -- (2,1) -- (1,1) -- (0,1) -- (0.3,1.3);
\draw[] (1,1) -- (1.3,1.3) --(1.3,0.3) -- (2.3,0.3) -- (2.3,1.3);
\draw[] (1,1) -- (1,0) -- (0,0) -- (0,1);
 \draw[thick, dotted]  (2/3,1) -- (5/3,1.3);
\draw[thick, dotted, red] (5/3,1.3) -- (2.3,0.3);
\draw[thick, dotted, red] (0,0) -- (2/3,1);

\end{tikzpicture}
\end{minipage}\nolinebreak
\hspace{-0.2cm}
\begin{minipage}{0.33\linewidth}
\centering
\begin{tikzpicture}[scale=1.2]

 \coordinate (v) at (0.3,0.7);
 \coordinate (w) at (0.15,0.33);
     \coordinate (A) at (0,0);
     \coordinate (B) at (0.707,0.707);
     \coordinate (C) at (0,1.414);
     \coordinate (D) at (-0.707,0.707);     
     \coordinate (E) at ($(A) + (v)$);
          \coordinate (F) at ($(B) + (v)$);
               \coordinate (G) at ($(C) + (v)$);
                    \coordinate (H) at ($(D) + (v)$);
                    \coordinate (I) at ($(C) + (w)$);
                        \coordinate (J) at ($(A) + (w)$);
            \node at (C) {\color{magenta} \tiny $\bullet$};
             \node at (G) {\color{red}  \tiny $\bullet$};
              \node at (F) {\color{blue} \tiny $\bullet$};
      \node at (D) {\color{green} \tiny $\bullet$};

\draw (A) -- (B) -- (C) -- (D) -- (A);
\draw[dashed]  (H) -- (E) --(F);
\draw (F) -- (G) -- (H);
\draw[dashed] (A) -- (E);
\draw (B) -- (F);
\draw (C) -- (G);
\draw (D) -- (H);

\draw[thick, blue, dotted] ($(C) ! 1/2 ! (G)$) -- ($(G) ! 1/3  !(F)$);
\draw[thick, red, dotted]  ($(G) ! 1/3  !(F)$) -- (E);

\draw[thick, blue, dotted] ($(C) ! 1/2 ! (G)$) -- ($(C) ! 1/3  !(D)$);
\draw[thick, red, dotted]  ($(C) ! 1/3  !(D)$) -- (A);

    \node at (I) {\tiny $\bullet$};

\end{tikzpicture}
\end{minipage}
\hspace{-0.7cm}
\begin{minipage}{0.33\linewidth}
\centering
\begin{tikzpicture}[scale=1.2]

 \coordinate (v) at (0.3,0.7);
 \coordinate (w) at (0.15,0.33);
     \coordinate (A) at (0,0);
     \coordinate (B) at (0.707,0.707);
     \coordinate (C) at (0,1.414);
     \coordinate (D) at (-0.707,0.707);     
     \coordinate (E) at ($(A) + (v)$);
          \coordinate (F) at ($(B) + (v)$);
               \coordinate (G) at ($(C) + (v)$);
                    \coordinate (H) at ($(D) + (v)$);
                    \coordinate (I) at ($(C) + (w)$);
                        \coordinate (J) at ($(A) + (w)$);
                        \coordinate (K) at ($(A) + (w) + (0,0.707)$);

\draw (A) -- (B) -- (C) -- (D) -- (A);
\draw[dashed]  (H) -- (E) --(F);
\draw (F) -- (G) -- (H);
\draw[dashed] (A) -- (E);
\draw (B) -- (F);
\draw (C) -- (G);
\draw (D) -- (H);

\draw[fill, opacity=0.2] (A) -- (E) --(G) -- (C) -- (A);

    \node at (I) {\tiny $\bullet$};
      \node at (J) {\tiny $\bullet$};
          \node at (K) {\tiny $\bullet$};
        \draw [very thick, dashed] (I) -- (J);

\end{tikzpicture}
\end{minipage}
\caption{Rotational symmetry is preserved by the refolding process. The axis of symmetry $L$ is dashed and the plane containing $e$ and $L$ is opaque.}\label{fig2}
\end{figure}

In the other two cases the point $m$ is the midpoint of an edge $e$. We again consider the embedding of the unfolding in $\R^3$ contained in the plane $z=0$ and  interpret the central symmetry at the rotation by $180^{\circ}$ around the line $L$ 
passing through $m$  which is  perpendicular to the plane $z=0$. Repeating the same procedure as above yields a trajectory which has been refolded everywhere
except along the edge $e$ (Figure \ref{fig2} bottom left). Consider the plane $P$ containing $e$ and the line $L$ and
 fold the edge $e$ in such a way that this plane is fixed, and  the plane becomes the bisector of the angle which is $90^{\circ}$.  The plane $P$ contains the opposite edge $e'$ to $e$ and the center of the cube, in fact  the line $L$ passes through the center and the midpoint of $e'$ (Figure \ref{fig2} right). Again the two endpoints of $\gamma$ are a rotation of each other and thus cannot coincide (Figure \ref{fig2} middle).

Now we adapt this argument to the other three polyhedra, all three of which are made of equilateral triangles, so the following argument applies to each.
We begin with a geodesic segment $\gamma$ which starts and ends at a vertex and unfold it to a straight line segment
starting and ending at vertices  of the equilateral triangle tiling of the plane whose  sides are parallel to the unit vectors
$v_1 := (1,0)$, $v_2 := (\frac12,\frac{\sqrt{3}}{2})$ and $v_3 := (\frac{-1}{2},\frac{\sqrt{3}}{2})$.
We  suppose that the unfolded trajectory goes from the origin to a point $(p,q)$.  Just as in the case of the cube,  the midpoint $m$ has coordinates $(p/2,q/2)$ in the basis $\{v_1,v_2\}$. The segment $\gamma$ is a geodesic, thus  either $p$ or $q$ must be odd, i.e., $m$ is of the form
$(k,l+\frac{1}{2})$, $(k+\frac{1}{2},l)$, or $(k+\frac{1}{2},l+\frac{1}{2})$.  In the first case $m$ is in the middle of an edge in the direction $v_2$, in the second case $m$ is in the middle of an edge in the direction $v_1$, while in the last case $m$ is in the middle of an edge in the direction $v_3$, in each of these cases the unfolding is centrally symmetric around the point $m$ (Figure \ref{fig3}).

\usetikzlibrary{lindenmayersystems}
\begin{figure}[ht]
\centering
\begin{tikzpicture}
  \pgfdeclarelindenmayersystem{triangular grid}{\rule{F->F-F+++F--F}}
  \path[draw=black,
  l-system={triangular grid,step=1cm,
    angle=-60,axiom=F--F--F,order=2,
  }]
  lindenmayer system -- cycle;
  
  \draw[dotted] (0,0) -- (7/2,0.866);
 \draw[densely dotted] (0,0) -- (5/2,0.866);
 \draw[dashed] (0,0) -- (2,1.732);
 \node at (7/4,0.433) {\tiny $\bullet$};
  \node at (5/4,0.433) {\tiny $\bullet$};
  \node at (1,0.866) {\tiny $\bullet$};

\end{tikzpicture}
\caption{The three possible cases for the triangular lattice.}\label{fig3}
\end{figure}
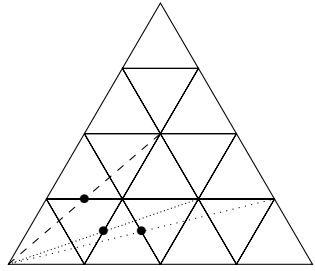

As in the previous case, embedding in the plane $z=0$ of $\R^3$, and refolding leads to an axis $L$ of rotational  symmetry of the geodesic $\gamma$. In each of the three cases the line of symmetry connects the midpoint of an edge to the center of the polyhedron and then to the midpoint of another edge, but in somewhat different ways.  The cases of the  octagon (Figure \ref{fig4} Left) and the  icosahedron are similar to the cube, if $m$ is the midpoint of the edge $e$, then the plane 
containing $L$ and $e$ contains the center $c$ of the polygon and another edge $e'$, and  $L$ passes through $m$, the midpoint $m'$ of $e'$ and $c$.
 The rotation by $180^\circ$ about $L$
 does not fix any vertices, thus the endpoints of $\gamma$ are distinct.
The tetrahedral case is different.  Again we consider the plane $P$ containing the edge $e$  and the line $L$, which after the final refold bisects the angle.  This bisection property implies that $P$  and in fact $L$ contain the center of the tetrahedron and the midpoint of the edge opposite to $e$ (Figure \ref{fig4} right).
 \end{proof}

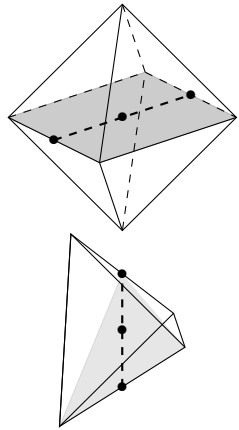
\begin{figure}
\begin{minipage}{0.5\linewidth}
 \centering
 \begin{tikzpicture}[scale=3]
\coordinate (A1) at (0,0);
\coordinate (A2) at (0.6,0.2);
\coordinate (A3) at (1,0);
\coordinate (A4) at (0.4,-0.2);
\coordinate (B1) at (0.5,0.5);
\coordinate (B2) at (0.5,-0.5);

\draw[dashed] (A1) -- (A2) -- (A3);
\draw (A1) -- (A4) -- (A3);
\draw[dashed] (B1) -- (A2) -- (B2);
\draw (B1) -- (A4) -- (B2);
\draw (B1) -- (A1) -- (B2) -- (A3) --cycle;

\node at (0.2,-0.1) {\tiny $\bullet$};
\node at (0.8,0.1) {\tiny $\bullet$};
\node at (0.5,0) {\tiny $\bullet$};
\draw[fill, opacity=0.2] (A1) -- (A2) -- (A3) -- (A4) -- cycle;
\draw[thick, dashed] (0.2,-0.1) -- (0.8,0.1)  ;

\end{tikzpicture}
\end{minipage}
\begin{minipage}{0.5\linewidth}
 \centering
\begin{tikzpicture}[scale=1.5]
\coordinate (A1) at (0,0);
\coordinate (A2) at (1,0);
\coordinate (A3) at (1,1);
\coordinate (A4) at (0,1);

\coordinate (w') at (0.05, 0.35);
\coordinate (v) at  ($2*(w')$);

\coordinate (w) at ($(0.5,0) + (w')$);

\coordinate (B1) at  ($(A1) + (v)$);
\coordinate (B2) at  ($(A2) + (v)$);
\coordinate (B3) at  ($(A3) + (v)$);
\coordinate (B4) at  ($(A4) + (v)$);

\coordinate (m) at ($(A1) + (w)$);
\coordinate (c) at ($(0.5,0.5) + (w')$);
\coordinate (n) at ($(c) + (c) - (m)$);

\node  at (m) {\tiny $\bullet$};
\node  at  (c) {\tiny $\bullet$};
\node at (n) {\tiny $\bullet$};

\draw (A1) -- (A3) -- (B4) -- cycle;
\draw  (A1) -- (B2)  -- (B4) -- cycle;
\draw (A3) -- (B2);
\draw[fill, opacity = 0.1] (n) -- (A1) -- (B2);

\draw[thick, dashed] (m) -- (c) -- (n);
\end{tikzpicture}
\end{minipage}

\caption{The plane $P$ and the axis $L$ pass through the midpoints of the two edges and the center of the polygon.}\label{fig4}
\end{figure}

There is one more platonic solid, the dodecahedron. It turns out that there are geodesics from a vertex to itself on the dodecahedron \cite{AA,AA1,F,P}, of course our proof cannot work in this case since pentagons do not tile the plane.

The symmetries of platonic solids have been extremely well studied.  In particular, any pair of vertices is symmetric by one of the symmetries arising in our proof, one can  construct explicit geodesics between any pair of distinct vertices, such constructions have been given in
 in \cite{DDTY} and \cite{F}.

On the tetrahedron there is a simpler proof since the net of the tetrahedron  is an equilateral triangle, so it tiles the plane.  Thus each vertex of the triangular lattice corresponds to a unique vertex of the polygon once we fix the correspondance at the origin.  This is not the case for the three other platonic solids we treat since their nets do not tile the plane .

\end{document}